\documentclass{amsart}

\usepackage{amssymb}

\usepackage[dvips]{graphicx}
\usepackage{psfrag}

\bibliographystyle{amsplain}

\theoremstyle{plain}
\newtheorem{mainthm}{Theorem}

\swapnumbers
\newtheorem{theorem}{Theorem}[section]

\newtheorem{corollary}[theorem]{Corollary}
\newtheorem{lemma}[theorem]{Lemma}
\newtheorem{conj}[theorem]{Conjecture}

\newcommand{\tcap}{\pitchfork}

\newcommand{\mundo}{\operatorname{Diff}^1(M)}

\newcommand{\Z}{\mathbb{Z}}
\newcommand{\N}{\mathbb{N}}

\newcommand{\eps}{\varepsilon}


\title[Two-sided limit shadowing property]{Hyperbolicity, transitivity and the two-sided limit shadowing property}

\author[Bernardo Carvalho]{Bernardo Carvalho}

\date{\today}

\thanks{2010 \emph{Mathematics Subject Classification}: Primary 37D20; Secondary 37C20.}
\keywords{Pseudo-orbit, Shadowing, Limit Shadowing, Hyperbolicity, Transitivity.}
\thanks{This paper was partially supported by CAPES (Brazil).}

\address{Universidade Federal do Rio de Janeiro - UFRJ}

\email{bmcarvalho06@gmail.com}

\begin{document}

\begin{abstract}
We explore the notion of two-sided limit shadowing property introduced by Pilyugin \cite{P1}. Indeed, we characterize the $C^1$-interior of the set of diffeomorphisms with such a property on closed manifolds as the set of transitive Anosov diffeomorphisms. As a consequence we obtain that all codimention-one Anosov diffeomorphisms have the two-sided limit shadowing property. We also prove that every diffeomorphism $f$ with such a property on a closed manifold has neither sinks nor sources and is transitive Anosov (in the Axiom A case). In particular, no Morse-Smale diffeomorphism have the two-sided limit shadowing property. Finally, we prove that $C^1$-generic diffeomorphisms on closed manifolds with the two-sided limit shadowing property are transitive Anosov. All these results allow us to reduce the well-known conjecture about the transitivity of Anosov diffeomorphisms on closed manifolds to prove that
the set of diffeomorphisms with the two-sided limit shadowing property coincides with the set of Anosov diffeomorphisms.
\end{abstract}

\maketitle

\section{Introduction and Statement of Results}

Many kinds of \textit{shadowing} properties were intensely studied last years. Specially the pseudo-orbit shadowing property (see \cite{P}) due to its relationship with stability and ergodic theories. This property studies the closeness of approximate and exact orbits of dynamical systems on unbounded intervals. We consider $(X,d)$ a compact metric space and $f:X\rightarrow X$ a homeomorphism. The orbit of a point $x\in X$ is the set $\{f^i(x); i\in\Z\}$. Fix $\delta>0$. We say that a sequence $\{x_i\}_{i\in\Z}$ of points in $X$ is a \emph{$\delta$-pseudo-orbit} if it satisfies
$$d(f(x_i),x_{i+1})<\delta, \,\,\,\,\,\, i\in\Z.$$

A pseudo-orbit is $\eps$-\emph{shadowed} if there is a point $y\in X$ such that
$$d(f^i(y),x_i)<\eps, \,\,\,\,\,\, i\in\Z.$$

We say that $f$ has the \emph{shadowing property} (usually called pseudo-orbit tracing property) if it satisfies: given $\eps>0$ there exists $\delta>0$ such that every $\delta$-pseudo-orbit is $\eps$-shadowed. This property is well studied in \cite{Pa} and \cite{P}. Often pseudo-orbits are obtained as results of numerical studies of dynamical systems. In this context the shadowing property means that numerically found orbits with uniform small errors are close to real orbits.

Another kind of shadowing property that has been intensely studied is the \emph{limit shadowing property}. It deals with pseudo-orbits with errors tending to zero. More precisely, we say that a sequence $(x_i)_{i\in\N}$ of points in $X$ is a \emph{limit pseudo-orbit} if it satisfies
$$d(f(x_i),x_{i+1})\rightarrow 0, \,\,\,\,\,\, i\rightarrow\infty.$$

A limit pseudo-orbit is \emph{limit-shadowed} if there exists a point $y\in X$ such that
$$d(f^i(y),x_i)\rightarrow 0, \,\,\,\,\,\, i\rightarrow \infty.$$

We say that $f$ has the \emph{limit shadowing property} if every limit pseudo-orbit is limit-shadowed. In this case the values $d(f(x_i),x_{i+1})$ may be large for small $i\in\N$ but converge to zero as $i\rightarrow\infty$. From the numerical viewpoint this property means the following (as observed in \cite{ENP}): if we apply a numerical method that approximate $f$ with `improving accuracy' so that one step errors tend to zero as time goes to infinity then the numerically obtained orbits tend to real ones.

We define a negative limit shadowing property as follows. A sequence $(x_i)_{i\leq0}$ of points in $X$ is a \emph{negative limit pseudo-orbit} if it satisfies
$$d(f(x_i),x_{i+1})\rightarrow 0, \,\,\,\,\,\, i\rightarrow-\infty.$$
A negative limit pseudo-orbit is \emph{limit-shadowed} if there exists a point $y\in X$ such that
$$d(f^i(y),x_i)\rightarrow 0, \,\,\,\,\,\, i\rightarrow-\infty.$$
We say that $f$ has the \emph{negative limit shadowing property} if every negative limit pseudo-orbit is limit-shadowed.





In this work we consider an analogue property that consider two-sided limit pseudo-orbits and two-sided limit shadows. Precisely, we say that a sequence $(x_i)_{i\in\Z}$ of points in $X$ is a \emph{two-sided limit pseudo-orbit} if it satisfies
$$d(f(x_i),x_{i+1})\rightarrow 0, \,\,\,\,\,\, |i|\rightarrow\infty.$$

A two-sided limit pseudo-orbit is \emph{two-sided limit-shadowed} if there is a point $y\in X$ such that
$$d(f^i(y),x_i)\rightarrow 0, \,\,\,\,\,\, |i|\rightarrow \infty.$$

We say that $f$ has the \emph{two-sided limit shadowing property} if every two-sided limit pseudo-orbit is two-sided limit shadowed.

Let $M$ be a closed $C^{\infty}$ manifold and let $\mundo$ be the set of all diffeomorphisms of $M$ endowed with the $C^1$ topology. Denote by $d$ the distance on $M$ induced from a Riemannian metric $\|.\|$ on the tangent bundle $TM$.

We denote by $\mathcal{S}$ the set of all diffeomorphisms of $M$ with the shadowing property, denote by $\mathcal{LS}$ the set of all diffeomorphisms of $M$ with the limit shadowing property and denote by $\mathcal{TLS}$ the set of all diffeomorphisms of $M$ with the two-sided limit shadowing property.

S. Pilyugin says that it is unreasonable to study the two-sided limit shadowing property without putting any restriction on the values $d(f(x_i),x_{i+1})$ (see Remark 2 \cite{P1}). Counteracting him we not only study the two-sided limit shadowing property without any restriction on the values $d(f(x_i),x_{i+1})$, but we are able to characterize the $C^1$ interior of $\mathcal{TLS}$ and relate this to a well known open problem.

Let $\Lambda$ be a compact and invariant set. A linear subbundle $E$ of the tangent bundle $T_{\Lambda}M$ is \emph{uniformly contracted} by $f$ if it is $Df$-invariant and there exists $N\geq1$ such that for any $x\in K$ and any unit vector $v\in E_x$ we have $$||Df^N(x)v||<\frac{1}{2}.$$
If $E$ is uniformly contracted for $f^{-1}$ we say that it is \emph{uniformly expanded}. The set $\Lambda$ is \emph{hyperbolic} if $T_{\Lambda}M=E\oplus F$, where $E$ is uniformly contracted and $F$ is uniformly expanded. We call $E$ the \textit{stable bundle} and $F$ the \textit{unstable bundle}. If the whole ambient manifold $M$ is hyperbolic we say that $f$ is \emph{Anosov}. We say that $f$ is \emph{transitive} if it has a dense orbit in $M$.

\begin{mainthm}\label{maintheorem2}
The $C^1$-interior of $\mathcal{TLS}$ is equal to the set of transitive Anosov diffeomorphisms.
\end{mainthm}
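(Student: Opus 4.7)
The goal is to establish both inclusions $\{\text{transitive Anosov}\}\subseteq\interior(\mathcal{TLS})$ and $\interior(\mathcal{TLS})\subseteq\{\text{transitive Anosov}\}$. For the forward inclusion, I would first note that the set of transitive Anosov diffeomorphisms is $C^1$-open: the Anosov condition is $C^1$-open by structural stability, and within the Anosov class transitivity is equivalent to $\Omega(f)=M$, which is $C^1$-robust via the spectral decomposition. It then remains to show that every transitive Anosov diffeomorphism lies in $\mathcal{TLS}$. Given a two-sided limit pseudo-orbit $(x_i)_{i\in\Z}$, I would split it at the origin: the positive tail is a limit pseudo-orbit, hence limit-shadowed by some $y^+$ (using that Anosov diffeomorphisms enjoy the limit shadowing property), while the negative tail is negatively limit-shadowed by some $y^-$. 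By choosing these two one-sided shadows to sit close to a common central block of the pseudo-orbit, the local product structure of the transitive Anosov $f$ lets one form the bracket $y:=[y^+,y^-]\in W^s_{\mathrm{loc}}(y^+)\cap W^u_{\mathrm{loc}}(y^-)$. Exponential contraction on $W^s$ (resp.\ expansion on $W^u$) then converts the positive (resp.\ negative) limit shadowing by $y^{\pm}$ into a two-sided limit shadowing of $(x_i)$ by $y$.

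For the reverse inclusion, let $f\in\interior(\mathcal{TLS})$ and pick a $C^1$-neighborhood $\mathcal{U}$ of $f$ with $\mathcal{U}\subseteq\mathcal{TLS}$. The paper's own result that every element of $\mathcal{TLS}$ has neither sinks nor sources gives that no $g\in\mathcal{U}$ admits a periodic sink or source. I would then argue via Franks' lemma that any non-hyperbolic periodic point of some $g\in\mathcal{U}$ could be deformed by a further small $C^1$ perturbation (still inside $\mathcal{U}$) into a sink or a source, contradicting the previous statement; hence every periodic orbit of every $g\in\mathcal{U}$ is a hyperbolic saddle, i.e.\ $f$ is a \emph{star} diffeomorphism. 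Hayashi's theorem then yields that $f$ is Axiom A without cycles, and combining this with the paper's other structural result that every Axiom A element of $\mathcal{TLS}$ is transitive Anosov closes this inclusion.

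The main obstacle is the forward inclusion, specifically assembling a \emph{single} two-sided shadow from the one-sided data. Bridging $y^+$ and $y^-$ via the bracket requires both shadows to land inside a uniform-size local product neighborhood of the pseudo-orbit, and the two-sided decay of the shadowing error has to be controlled uniformly in the (a priori arbitrary) decay rates of $d(f(x_i),x_{i+1})$; this is precisely where the global hyperbolic structure of a transitive Anosov diffeomorphism---through Markov partitions or a specification-type estimate---is indispensable. The reverse inclusion, by contrast, is a fairly routine combination of the paper's earlier structural results with the Franks--Hayashi machinery for star diffeomorphisms.
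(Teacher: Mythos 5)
Your overall skeleton (openness of the set of transitive Anosov diffeomorphisms, star $\Rightarrow$ Axiom A via Hayashi, then the paper's structural results) is the right one, but both inclusions contain a step that fails as stated. For the forward inclusion, the bracket $y=[y^+,y^-]\in W^s_{\mathrm{loc}}(y^+)\cap W^u_{\mathrm{loc}}(y^-)$ only exists when $y^+$ and $y^-$ lie within the uniform local-product-structure distance of each other, and nothing puts them there: a two-sided limit pseudo-orbit may have arbitrarily large jumps $d(f(x_i),x_{i+1})$ for bounded $|i|$ (the absence of any such restriction is the whole point of the paper), and one-sided limit shadowing gives no control on $d(f^i(y^\pm),x_i)$ at finite times, only asymptotically. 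So there is no ``common central block'' near which both one-sided shadows land; $y^+$ and $y^-$ can sit in completely different parts of $M$, as in the pseudo-orbit used in Lemma~\ref{limsha}. This is precisely where transitivity must enter, and you flag it as the ``main obstacle'' without supplying the argument. The paper closes it via Lemma~\ref{lematopologico}: transitivity of an Anosov diffeomorphism on a closed connected manifold gives topological mixing, hence the specification property; a specification orbit $z$ that is $\delta$-close to the negative shadow at time $-N$ and to the positive shadow at time $N$ lets one concatenate the three orbit pieces into a $\delta$-pseudo-orbit, which is then $\eps$-shadowed, and expansiveness upgrades $\eps$-closeness on each tail to asymptotic closeness. (Alternatively one could argue that for a mixing Anosov diffeomorphism the \emph{global} intersection $W^s(y^+)\cap W^u(y^-)$ is nonempty, by density of unstable manifolds plus a local bracket; but some such global ingredient is indispensable and is missing from your sketch.)

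For the reverse inclusion, the step ``a non-hyperbolic periodic point of some $g\in\mathcal{U}$ can be deformed by a small $C^1$ perturbation into a sink or a source'' is false once $\dim M\geq 3$: if the non-hyperbolic orbit also carries eigenvalues strictly inside and strictly outside the unit circle (say $\tfrac12$, $1$, $2$), a Franks-type $C^1$-small perturbation can only move the central eigenvalue slightly, producing saddles of neighboring indices and never a sink or a source, so Theorem~\ref{depois} cannot be contradicted this way. The paper replaces this with an index argument: Lemma~\ref{pert} produces, after perturbation, two distinct hyperbolic periodic points with \emph{different indices}; a further perturbation makes the diffeomorphism Kupka--Smale while keeping such points; and Corollary~\ref{genlimsha} (any two periodic points of a Kupka--Smale diffeomorphism in $\mathcal{TLS}$ are homoclinically related, by Lemma~\ref{limsha}, hence have the same index) gives the contradiction, so $f$ is star. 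From that point on (Hayashi, then Theorem~\ref{depois}) your argument coincides with the paper's.
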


This theorem can be related with other results concerning the $C^1$ interior of other dynamical properties: \cite{LMS} shows that the $C^1$ interior of $\mathcal{S}$ is equal to the set of $C^1$ structurally stable diffeomorphisms and \cite{P1} shows that the $C^1$ interior of $\mathcal{LS}$ is equal to the set of $\Omega$-stable diffeomorphisms.

This theorem can also be related with a very old open problem (see \cite{HK} section 18) that is whether Anosov diffeomorphisms are transitive. Many authors tried to prove this but only got partial results. J. Franks \cite{F1} and S. Newhouse \cite{N} proved it for codimension one Anosov diffeomorphisms (with dimension of stable or unstable space equal to one). If we could prove that Anosov diffeomorphisms belong to $\mathcal{TLS}$ then they belong to the $C^1$ interior of $\mathcal{TLS}$ (the set of Anosov diffeomorphisms is open in $\mundo$) and Theorem \ref{maintheorem2} assures that they are transitive. On the other hand, if an Anosov diffeomorphism is transitive then Theorem \ref{maintheorem2} shows that it belongs to $\mathcal{TLS}$. This allow us to state the following:

\begin{conj}\label{conj}
$\mathcal{TLS}$ is equal to the set of Anosov diffeomorphisms.
\end{conj}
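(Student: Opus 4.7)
The conjecture asserts the two inclusions $\mathcal{TLS}\subseteq\{\text{Anosov}\}$ and $\{\text{Anosov}\}\subseteq\mathcal{TLS}$, each of which poses serious but different obstacles. My plan is to treat them separately and then package the conclusion using Theorem \ref{maintheorem2}.

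For the inclusion $\{\text{Anosov}\}\subseteq\mathcal{TLS}$, I would start from an Anosov $f$ and a two-sided limit pseudo-orbit $(x_i)_{i\in\Z}$. First apply the classical Anosov shadowing lemma to obtain, for any $\eps>0$ chosen small with respect to the hyperbolicity constants, a true orbit $(f^i(y_0))$ that $\eps$-shadows $(x_i)$. To upgrade $\eps$-shadowing into two-sided \emph{limit} shadowing, the idea is to exploit the fact that errors $d(f(x_i),x_{i+1})$ decay to zero as $|i|\to\infty$, combined with the uniform exponential contraction on $E^s$ (for the forward side) and on $E^u$ under $f^{-1}$ (for the backward side). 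Concretely, I would construct positive and negative shadowing points $y^+$ and $y^-$ from the one-sided versions of the argument, then take the shadowing point $y$ as the (locally unique) intersection $W^s_{\mathrm{loc}}(y^+)\cap W^u_{\mathrm{loc}}(y^-)$ provided by the local product structure. Standard telescoping estimates along stable and unstable leaves then give $d(f^i(y),x_i)\to 0$ as $|i|\to\infty$.

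For the inclusion $\mathcal{TLS}\subseteq\{\text{Anosov}\}$, the approach is to show that any $f\in\mathcal{TLS}$ satisfies Axiom A, so that the Axiom A case already treated in the paper (and recalled in the abstract) applies and delivers transitive Anosov. The partial information already collected (no sinks, no sources; the classical shadowing property can be extracted from TLS by truncating pseudo-orbits at large $|i|$) is a starting point but not sufficient, because it does not preclude homoclinic tangencies or non-hyperbolic periodic points. The plan would be to combine TLS with Franks-type perturbation arguments on the periodic set, upgraded by a Ma\~n\'e--Pugh closing-style step to obtain hyperbolicity of $\overline{\per(f)}$ and local product structure there; the two-sided decay of errors should provide the extra flexibility needed to absorb the $C^0$-freedom in classical shadowing arguments and pin down hyperbolicity at $f$ itself, not merely on a residual or open set.

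The genuine main obstacle is the direction $\{\text{Anosov}\}\subseteq\mathcal{TLS}$. As the authors point out immediately before stating the conjecture, if this inclusion is proved then, since Anosov diffeomorphisms form a $C^1$-open set, Theorem \ref{maintheorem2} forces every Anosov diffeomorphism to be transitive, resolving the well-known open problem in \cite{HK}. So any direct proof of this inclusion must implicitly encode a proof of Anosov transitivity, and one cannot hope to sidestep that difficulty by purely soft shadowing techniques. The realistic route is therefore conditional: prove (or assume) that every Anosov diffeomorphism on a closed manifold is transitive, then invoke Theorem \ref{maintheorem2} to deduce $\{\text{Anosov}\}\subseteq\mathrm{int}(\mathcal{TLS})\subseteq\mathcal{TLS}$, and combine with the first inclusion to conclude $\mathcal{TLS}=\{\text{Anosov}\}$. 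In this sense the conjecture is essentially equivalent to the transitivity conjecture for Anosov diffeomorphisms, which is exactly the reduction the paper advertises.
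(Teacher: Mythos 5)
There is a genuine gap here, and it is structural: the statement you were asked about is stated in the paper as an open \emph{conjecture}, and the paper itself offers no proof of it --- only the reduction you describe in your last paragraph (via Theorem \ref{maintheorem2}, plus Franks--Newhouse for the codimension-one case). Your proposal likewise does not prove either inclusion, so it should be presented as a discussion of the reduction, not as a proof plan that could be completed. Beyond that, the technical sketches contain steps that would fail. For $\{\text{Anosov}\}\subseteq\mathcal{TLS}$: a two-sided limit pseudo-orbit is \emph{not} a $\delta$-pseudo-orbit --- the errors $d(f(x_i),x_{i+1})$ can be arbitrarily large on a finite central block --- so the Anosov shadowing lemma cannot be applied to the whole sequence as your first step asserts. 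Moreover, the proposed gluing $W^s_{\mathrm{loc}}(y^+)\cap W^u_{\mathrm{loc}}(y^-)$ requires $y^+$ and $y^-$ to be uniformly close at some time, which is exactly what can fail for a non-transitive Anosov diffeomorphism: the forward and backward shadowing points may lie in different basic pieces with no orbit connecting them. This is precisely why the paper's Lemma \ref{lematopologico} needs the \emph{specification} property (hence topological mixing, hence transitivity) in addition to expansiveness and shadowing to perform that gluing; your acknowledgement at the end that soft shadowing cannot sidestep transitivity is correct, but it contradicts the sketch you give above it.

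For $\mathcal{TLS}\subseteq\{\text{Anosov}\}$ the proposal is also not viable as written. The claim that the classical shadowing property ``can be extracted from TLS by truncating pseudo-orbits at large $|i|$'' is unjustified: TLS carries no uniform $\eps$--$\delta$ quantifiers, and the paper nowhere asserts that $\mathcal{TLS}\subseteq\mathcal{S}$. More importantly, the Franks/Ma\~n\'e-type perturbation scheme produces non-hyperbolic behavior only for diffeomorphisms $g$ that are $C^1$-close to $f$; since $\mathcal{TLS}$ is not known to be open, one cannot transfer the resulting contradiction back to $f$ itself. This is exactly why the paper obtains hyperbolicity only on the $C^1$-interior of $\mathcal{TLS}$ (Theorem \ref{maintheorem2}, via the star condition, Lemma \ref{pert} and Corollary \ref{genlimsha}), generically (Theorem \ref{maintheorem1}, via the open dense set $\mathcal{P}$ of Lemma \ref{difind}), or under the additional Axiom A hypothesis (Theorem \ref{depois}, via Corollary \ref{Morales} and the spectral decomposition). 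In short: your concluding observation that the conjecture is essentially equivalent to the transitivity problem for Anosov diffeomorphisms agrees with the paper's own discussion, but neither direction of the conjecture is proved by the steps you outline, and several of those steps (applying the shadowing lemma to the full pseudo-orbit, the local-product gluing of $y^+$ and $y^-$, extracting shadowing from TLS, and pinning hyperbolicity at $f$ without openness of $\mathcal{TLS}$) would break down.
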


As already mentioned, this reformulates the problem of the transitivity of Anosov diffeomorphisms in terms of the two-sided limit shadowing property. Using the results of Franks and Newhouse and also Theorem \ref{maintheorem2} we obtain a particular case of this conjecture:

\begin{corollary}
Codimension one Anosov diffeomorphisms have the two-sided limit shadowing property.
\end{corollary}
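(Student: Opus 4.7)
The plan is to derive this corollary as a direct consequence of Theorem \ref{maintheorem2} combined with the classical transitivity results of Franks and Newhouse, without any additional dynamical argument. The only subtlety is to read Theorem \ref{maintheorem2} in the appropriate direction: it asserts an equality, so every transitive Anosov diffeomorphism automatically lies in the $C^1$-interior of $\mathcal{TLS}$, and a fortiori in $\mathcal{TLS}$.

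More concretely, I would proceed as follows. Let $f \in \mundo$ be a codimension one Anosov diffeomorphism, meaning that either $\dim E = 1$ or $\dim F = 1$ in the hyperbolic splitting $TM = E \oplus F$. By the theorems of Franks \cite{F1} and Newhouse \cite{N}, every such diffeomorphism is transitive. Thus $f$ belongs to the set of transitive Anosov diffeomorphisms. By Theorem \ref{maintheorem2} this set coincides with the $C^1$-interior of $\mathcal{TLS}$, so in particular $f \in \mathcal{TLS}$, which is the claim.

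There is no genuine obstacle here; the entire content of the corollary is absorbed into the two inputs. The nontrivial implication in Theorem \ref{maintheorem2}, namely that transitive Anosov diffeomorphisms possess the two-sided limit shadowing property, does the real work, while Franks--Newhouse supplies the transitivity hypothesis automatically in the codimension one setting. For this reason no separate argument controlling two-sided limit pseudo-orbits is needed in the proof of the corollary itself.
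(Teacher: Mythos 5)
Your proposal is correct and matches the paper's own derivation: the corollary is obtained exactly by combining the Franks--Newhouse transitivity of codimension one Anosov diffeomorphisms with the inclusion of transitive Anosov diffeomorphisms in $\mathcal{TLS}$ furnished by Theorem \ref{maintheorem2} (whose relevant direction rests on Corollary \ref{tsls}). No further argument about two-sided limit pseudo-orbits is needed, just as you say.
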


Another problem that is included in Conjecture \ref{conj} is when diffeomorphisms in $\mathcal{TLS}$ are Anosov. Theorem $\ref{maintheorem2}$ shows that diffeomorphisms in the $C^1$ interior of $\mathcal{TLS}$ are Anosov but unfortunately we do not know if $\mathcal{TLS}$ is open in $\mundo$.

In contrast, although Anosov diffeomorphisms have the shadowing property (see \cite{R}) it is well known that $\mathcal{S}$ is not contained in the set of Anosov diffeomorphisms. Indeed, Morse-Smale diffeomorphisms are $C^1$ structurally stable (see \cite{PS}) then belong to the $C^1$ interior of $\mathcal{S}$ but are not Anosov. Moreover, $\mathcal{S}$ is not open in $\mundo$. J. Lewowicz \cite{L} gives an example of a diffeomorphism that is topologically stable (thus has the shadowing property if dim$(M)\geq2$ see \cite{W}) but is not $C^1$ structurally stable then do not belong to the $C^1$ interior of $\mathcal{S}$.

Another partial answer to Conjecture \ref{conj} is as follows. Let $f\in\mundo$. A point $p\in M$ is called \textit{periodic} if there exists $n\in\N$ such that $f^n(p)=p$. The smallest $n\in\N$ satisfying $f^n(p)=p$ is called the period of $p$ and will be denoted by $\pi(p)$. Denote by $Per(f)$ the set of all periodic points of $f$. A periodic point is called \textit{hyperbolic} if its orbit $\mathcal{O}(p)$ is a hyperbolic set. We define the \textit{index} of a hyperbolic periodic point $p$ as the dimension of the stable bundle and denote it by $ind(p)$. If $ind(p)=dim(M)$ we say that $p$ is a \emph{sink} and if $ind(p)=0$ we say that $p$ is a \emph{source}.

A point $p\in M$ is called \textit{non-wandering} if for every neighborhood $U$ of $p$ there exists $n\in\N$ satisfying $f^n(U)\cap U\neq\emptyset$. Denote by $\Omega(f)$ the set all non-wandering points of $f$. We say that a diffeomorphism $f$ is \emph{Morse-Smale} if its nonwandering set is
a finite union of hyperbolic periodic points all of whose invariant manifolds are in general position.

If $\Omega(f)=\overline{Per(f)}$ and is a hyperbolic set we say that $f$ is \textit{Axiom A}. Toward to characterize the set $\mathcal{TLS}$, we prove the following:

\begin{mainthm}\label{depois}
If $f\in\mathcal{TLS}$ then $f$ has neither sinks nor sources. If, in addition, $f$ is Axiom A, then $f$ is transitive Anosov. In particular, Morse-Smale diffeomorphisms do not belong to $\mathcal{TLS}$.
\end{mainthm}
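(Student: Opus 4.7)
The plan is to build, from any attracting periodic orbit and more generally from any attracting basic piece of the spectral decomposition, a concrete two-sided limit pseudo-orbit that cannot be two-sided limit-shadowed; this directly contradicts $f\in\mathcal{TLS}$. The case of sources will then follow from the sink case applied to $f^{-1}\in\mathcal{TLS}$ (one easily checks that $\mathcal{TLS}$ is invariant under time reversal), and the Axiom A conclusion will follow by upgrading the sink argument to an attractor in the spectral decomposition.

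Assume first that $p$ is a sink of $f$ of period $\pi$. Because $M$ is a connected closed manifold and $f$ is a diffeomorphism, $W^s(\mathcal{O}(p))\ne M$: if the basin were all of $M$, then $f^n(M)$ would eventually be contained in arbitrarily small neighborhoods of $\mathcal{O}(p)$, contradicting $f^n(M)=M$. Pick $y\in M\setminus W^s(\mathcal{O}(p))$ and consider
\[
x_i=\begin{cases} f^i(p), & i\le 0,\\ f^i(y), & i\ge 1.\end{cases}
\]
All consecutive errors $d(f(x_i),x_{i+1})$ vanish except at $i=0$, so $(x_i)_{i\in\Z}$ is a two-sided limit pseudo-orbit. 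Any two-sided limit shadow $z$ must satisfy $d(f^i(z),f^i(p))\to 0$ as $i\to-\infty$ and $d(f^i(z),f^i(y))\to 0$ as $i\to+\infty$. The first relation forces the $\alpha$-limit of $z$ to equal $\mathcal{O}(p)$; since $\mathcal{O}(p)$ is a hyperbolic periodic orbit with trivial unstable direction, $f^{-1}$ is locally expanding at each of its points, and the only orbits backward-asymptotic to $\mathcal{O}(p)$ are points of $\mathcal{O}(p)$ itself, i.e. $W^u(\mathcal{O}(p))=\mathcal{O}(p)$. Therefore $z\in\mathcal{O}(p)$, so the second relation becomes $\omega(y)=\mathcal{O}(p)$, i.e. $y\in W^s(\mathcal{O}(p))$, contradicting the choice of $y$. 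Applying the same reasoning to $f^{-1}$ rules out sources.

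For the Axiom A part, let $\Omega(f)=\Lambda_1\cup\cdots\cup\Lambda_k$ be the spectral decomposition into transitive hyperbolic basic sets. Axiom A theory provides at least one attractor $\Lambda$ among these, i.e. a basic set with $W^s(\Lambda)$ open and $W^u(\Lambda)=\Lambda$. If $k\ge 2$, then $W^s(\Lambda)\ne M$ since the basins of distinct basic sets are disjoint, so we may pick a periodic $p\in\Lambda$ together with $y\notin W^s(\Lambda)$ and rerun the same pseudo-orbit construction: any shadow $z$ now satisfies $\alpha(z)=\mathcal{O}(p)\subset\Lambda$, hence $z\in W^u(\Lambda)=\Lambda$, so $\omega(y)=\omega(z)\subset\Lambda$, forcing $y\in W^s(\Lambda)$, a contradiction. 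Therefore $k=1$, and from $M=\bigcup_i W^u(\Lambda_i)=W^u(\Lambda_1)=\Lambda_1$ we conclude $\Omega(f)=M$; so $f$ is Anosov, and transitivity follows because the unique basic set is transitive. Morse-Smale diffeomorphisms are Axiom A with a finite non-wandering set that always contains a sink, so they are excluded immediately by the first assertion.

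The key rigidity inputs — $W^u(\mathcal{O}(p))=\mathcal{O}(p)$ for a hyperbolic periodic sink, and $W^u(\Lambda)=\Lambda$ for an attractor in an Axiom A diffeomorphism — are standard but are exactly where care is needed: they are what force the shadow $z$ into the attracting set, which in turn transports the $\omega$-limit of $y$ into $\mathcal{O}(p)$ (resp. into $\Lambda$). With those two ingredients secured, the remainder of the argument reduces to bookkeeping with the spectral decomposition and the symmetry $f\leftrightarrow f^{-1}$.
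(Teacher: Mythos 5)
Your proof is correct and follows essentially the same route as the paper: your glued sequence $(f^i(p))_{i\le 0}$, $(f^i(y))_{i\ge 1}$ is exactly the pseudo-orbit used in Lemma \ref{limsha}, and the contradiction via $W^u(\mathcal{O}(p))=\mathcal{O}(p)$ for a sink (resp. $W^u(\Lambda)=\Lambda$ for an attracting basic set) is the same mechanism the paper runs through Corollary \ref{Morales} and the spectral decomposition. The only cosmetic differences are that you inline the lemma instead of quoting it, use a single attractor together with $M=\bigcup_i W^u(\Lambda_i)$ rather than the attractor--repeller pair, and exclude Morse--Smale diffeomorphisms via the existence of a sink rather than via ``Axiom A but not Anosov''.
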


More generally this shows that Axiom A diffeomorphisms that are not transitive Anosov do not have the two-sided limit shadowing property. It remains to know the relation of the two-sided limit shadowing and non-Axiom A diffeomorphisms.

We also study $C^1$ generic diffeomorphisms with the two-sided limit shadowing property. F. Abdenur and L. Diaz in \cite{AD} conjectured that $C^1$ generic diffeomorphisms with the shadowing property are $C^1$ structurally stable. We prove an anologue of this conjecture for the two-sided limit shadowing property. We say that a subset $\mathcal{R}$ of $\mundo$ is \emph{residual} if it contains a countable intersection of open and dense subsets of $\mundo$.

\begin{mainthm}\label{maintheorem1}
There exists a residual subset $\mathcal{R}$ of $\mundo$ such that every $f\in\mathcal{R}\cap\mathcal{TLS}$ is a transitive Anosov (and so structurally stable) diffeomorphism.
\end{mainthm}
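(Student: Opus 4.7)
The strategy is to build a residual set $\mathcal{R}\subset\mundo$ on which standard $C^1$-generic properties, together with the structural restrictions already established, force any $f\in\mathcal{R}\cap\mathcal{TLS}$ to be Axiom A. The Axiom A assertion of Theorem~\ref{depois} then upgrades this to transitive Anosov, and structural stability is automatic since transitive Anosov diffeomorphisms form a $C^1$-open set of structurally stable maps.

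I would let $\mathcal{R}$ be the intersection of three residual subsets of $\mundo$. First, the Kupka--Smale residual, ensuring every periodic point is hyperbolic with transverse invariant manifolds. Second, the residual arising from Pugh's general density theorem, on which $\Omega(f)=\overline{\per(f)}$. Third, and most crucially, the residual furnished by a Ma\~{n}\'{e}-type $C^1$-generic dichotomy: either $f$ has infinitely many periodic sinks or sources, or else $f$ is Axiom A.

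Now pick $f\in\mathcal{R}\cap\mathcal{TLS}$. Theorem~\ref{depois} asserts that $f$ has neither sinks nor sources, which rules out the first horn of the dichotomy; therefore $f$ is Axiom A. A second application of Theorem~\ref{depois} promotes Axiom A to transitive Anosov and concludes the proof.

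The main obstacle is justifying the full strength of the third generic dichotomy in arbitrary dimension. For surface diffeomorphisms this is Ma\~{n}\'{e}'s classical theorem. In higher dimensions the cleanest generic statements available (Bonatti--D\'{\i}az--Pujals, Crovisier, and others) control chain-recurrence classes via dominated splittings rather than outright uniform hyperbolicity, so the argument in general dimension would require an additional step that uses $f\in\mathcal{TLS}$ itself to exclude non-hyperbolic chain-recurrence classes. Concretely, one expects to produce, from any non-hyperbolic chain-recurrence class surviving on a residual set, an explicit two-sided limit pseudo-orbit that admits no two-sided limit shadow, contradicting the hypothesis $f\in\mathcal{TLS}$.
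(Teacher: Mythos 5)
Your argument hinges on a third generic ingredient --- a dichotomy ``either infinitely many sinks/sources or Axiom A'' on a residual subset of $\mundo$ --- and this is precisely where the proposal breaks down. That dichotomy is Ma\~n\'e's theorem only for surfaces; in dimension $\geq 3$ it is simply false as a generic statement: Ma\~n\'e's robustly transitive but non-Anosov diffeomorphisms of $\mathbb{T}^3$ (and the later examples of Bonatti--D\'{\i}az type) form nonempty $C^1$-open sets of diffeomorphisms having no sinks and no sources yet failing Axiom A, so no residual subset of $\mundo$ can realize your dichotomy. You acknowledge this, but the proposed repair --- ``one expects to produce, from any non-hyperbolic chain-recurrence class, a two-sided limit pseudo-orbit that admits no two-sided limit shadow'' --- is a hope rather than an argument, and nothing in the rest of your proof supplies it. In other words, using only the ``no sinks, no sources'' conclusion of Theorem~\ref{depois} is too weak: the two-sided limit shadowing hypothesis must enter the generic argument in a more substantial way than through that corollary.

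The paper's proof extracts exactly such a stronger consequence. By Lemma~\ref{limsha}, two-sided limit shadowing forces $W^s(p)\cap W^u(q)\neq\emptyset$ and $W^u(p)\cap W^s(q)\neq\emptyset$ for \emph{all} pairs of points; on the Kupka--Smale residual these intersections are transverse, so all periodic points are homoclinically related and hence have the same index (Corollary~\ref{genlimsha}). One then takes $\mathcal{R}=\mathcal{KS}\cap\mathcal{P}$, where $\mathcal{P}$ is the open and dense set of Lemma~\ref{difind}: if $f$ were not a star diffeomorphism, the Franks-type bifurcation of Lemma~\ref{pert} would produce nearby diffeomorphisms with two hyperbolic periodic points of different indices, and membership in $\mathcal{P}$ would transfer this index discrepancy to $f$ itself, contradicting Corollary~\ref{genlimsha}. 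Thus $f$ is star, Hayashi's theorem gives hyperbolicity (Axiom A plus no cycles), and only then is Theorem~\ref{depois} invoked to conclude transitive Anosov. If you want to salvage your outline, you would need to replace the unavailable dichotomy by an argument of this kind, i.e.\ one in which $f\in\mathcal{TLS}$ constrains indices of periodic orbits (or otherwise yields the star property) rather than merely excluding sinks and sources.
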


We observe that D. Todorov proved some results in the stability theory \cite{T} using a property called \emph{Lipschtiz two-sided limit shadowing property with exponent $\gamma$}. He proved that for all $\gamma\geq0$ this property is equivalent to structural stability (Theorem 4 in \cite{T}). As did Pilyugin he put restrictions on the values $d(f(x_i),x_{i+1})$. The biggest difference of this paper is that we consider the two-sided limit shadowing property without any restriction on these values.

This paper is organized as follows. In section 2 we discuss some topological properties, their consequences and relate them with the two-sided limit shadowing property. We also prove some consequences of the two-sided limit shadowing property and prove Theorem \ref{depois}. Then in section 3 we prove Theorems \ref{maintheorem2} and \ref{maintheorem1}.

\section{Some topological properties and proof of Theorem \ref{depois}}

Consider $(X,d)$ a compact metric space. We say that a homeomorphism $f:X\rightarrow X$ is \emph{expansive} if there exists $\eps>0$ such that if $x,y\in M$ satisfy $d(f^i(x),f^i(y))<\eps$ for all $i\in\Z$ then $x=y$. This says that two different orbits move away from each other.

For any $y\in X$ and $\eps>0$ we define
$$W^s_{\eps}(y)=\{x\in M; d(f^n(x),f^n(y))<\eps \,\,\, \textrm{for every} \,\,\, n\in\N\}$$
$$W^u_{\eps}(y)=\{x\in M; d(f^{-n}(x),f^{-n}(y))<\eps \,\,\, \textrm{for every} \,\,\, n\in\N\}.$$
These sets are called $\eps$-stable and $\eps$-unstable sets of $y$ respectively. More generally define the stable and unstable sets of $y\in X$ as
$$W^s(y)=\{x\in M; d(f^n(x),f^n(y))\rightarrow0, \,\,\, \textrm{if} \,\,\, n\rightarrow\infty\}$$
$$W^u(y)=\{x\in M; d(f^{-n}(x),f^{-n}(y))\rightarrow0, \,\,\, \textrm{if} \,\,\, n\rightarrow\infty\}.$$
It is well known that if $f$ is expansive then there exists $\eps>0$ such that $W^s_{\eps}(y)\subset W^s(y)$ and $W^u_{\eps}(y)\subset W^u(y)$ for all $y\in X$ (see \cite{M} Lemma 1).

Following the proof of Theorem 2.1 in \cite{ENP} we prove the following:

\begin{lemma}\label{shadexp}
Every expansive homeomorphism $f:X\rightarrow X$ with the shadowing property has both the limit shadowing and negative limit shadowing properties.
\end{lemma}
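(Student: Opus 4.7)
I would prove this following the scheme in the ENP paper. Let me focus on the limit shadowing part; the negative one follows by applying the same argument to $f^{-1}$, which is also expansive with the shadowing property.

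Fix an expansivity constant $c>0$ small enough that $W^s_c(y)\subset W^s(y)$ for every $y\in X$ (by the lemma of Ma\~n\'e cited in the excerpt). Take a limit pseudo-orbit $(x_i)_{i\in\N}$. For each $k\in\N$, set $\eps_k=c/2^{k+2}$ and choose, via the shadowing property, a $\delta_k>0$ such that every $\delta_k$-pseudo-orbit is $\eps_k$-shadowed. Since $d(f(x_i),x_{i+1})\to 0$, we can pick an increasing sequence $N_1<N_2<\cdots$ with $d(f(x_i),x_{i+1})<\delta_k$ for every $i\geq N_k$. Extend each tail to a bi-infinite $\delta_k$-pseudo-orbit $(\wt x_i^{\,k})_{i\in\Z}$ by setting $\wt x_i^{\,k}=x_i$ for $i\geq N_k$ and $\wt x_i^{\,k}=f^{i-N_k}(x_{N_k})$ for $i<N_k$. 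Shadowing then yields points $y_k\in X$ with $d(f^i(y_k),\wt x_i^{\,k})<\eps_k$ for all $i\in\Z$; in particular
$$d(f^i(y_k),x_i)<\eps_k \quad \text{for every } i\geq N_k.$$

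The main point is now to show that any fixed $y_k$ actually limit-shadows $(x_i)$. This is where expansivity enters. For $k<m$ and $i\geq N_m$, the triangle inequality gives
$$d(f^i(y_k),f^i(y_m))\leq d(f^i(y_k),x_i)+d(x_i,f^i(y_m))<\eps_k+\eps_m<c.$$
Applying this with the time shift $j=i-N_m\geq 0$ to the points $a=f^{N_m}(y_k)$ and $b=f^{N_m}(y_m)$, we get $d(f^j(a),f^j(b))<c$ for all $j\geq 0$, so $b\in W^s_c(a)\subset W^s(a)$. In other words, $d(f^i(y_k),f^i(y_m))\to 0$ as $i\to\infty$. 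Thus the shadows $\{y_k\}$ are all forward-asymptotic to one another.

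To finish, take $y=y_1$. For any fixed $k$ and every $i\geq N_k$,
$$d(f^i(y),x_i)\leq d(f^i(y_1),f^i(y_k))+d(f^i(y_k),x_i)< d(f^i(y_1),f^i(y_k))+\eps_k.$$
Letting $i\to\infty$, the first term vanishes by the previous step, so $\limsup_{i\to\infty}d(f^i(y),x_i)\leq\eps_k$. Since $k$ was arbitrary and $\eps_k\to 0$, we conclude $d(f^i(y),x_i)\to 0$, proving the limit shadowing property. The negative limit shadowing property follows verbatim by replacing $f$ with $f^{-1}$.

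I do not expect a serious obstacle: the only subtlety is bridging the one-sided hypothesis $d(f(x_i),x_{i+1})\to 0$ with the two-sided formulation of the shadowing property, which is handled by the bi-infinite extension $\wt x_i^{\,k}$. The key conceptual ingredient is using expansivity not to produce a shadow, but to compare two shadows and thereby upgrade a family of $\eps_k$-shadows on tails into a single asymptotic shadow.
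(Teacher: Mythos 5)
Your proof is correct and follows essentially the same route as the paper: produce shadows $y_k$ of ever-better tails of the pseudo-orbit, use expansivity (via $W^s_c\subset W^s$) to show these shadows are forward-asymptotic to one another, and conclude that the first shadow already limit-shadows the whole sequence. Your treatment is in fact slightly more careful than the paper's, both in extending the one-sided tails to bi-infinite pseudo-orbits before invoking shadowing and in the explicit $\eps_k$-bookkeeping.
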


\begin{proof}
Since $f$ is expansive there exists $\eps>0$ such that $W^s_{\eps}(y)\subset W^s(y)$ and $W^u_{\eps}(y)\subset W^u(y)$ for all $y\in X$. Choose $0<\delta<\frac{\eps}{2}$ such that every $\delta$-pseudo-orbits is $\eps$-shadowed. Let $(x_n)_{n\in\N}$ be a limit pseudo-orbit. For each $j\in\N$ choose $n_j\in\N$ such that $$d(f(x_n),x_{n+1})<\frac{\delta}{j}, \,\,\,\, n>n_j.$$ The shadowing property assures the existence of points $y_j\in X$ such that $$d(f^n(y_j),x_n)<\frac{\delta}{j}, \,\,\,\, n>n_j.$$ We claim that $(x_n)_{n\in\N}$ is limit shadowed by $y_1$. Indeed, $$d(f^n(y_1),f^n(y_j))\leq d(f^n(y_1),x_n) + d(x_n,f^n(y_j))<2\delta<\eps, \,\,\,\, n>n_j.$$ By the choice of $\eps$ we obtain $$d(f^n(y_1),f^n(y_j))\rightarrow0, \,\,\,\, n\rightarrow\infty$$ for all $j\in\N$. This imply that $$d(f^n(y_1),x_n)\rightarrow0, \,\,\,\, n\rightarrow\infty.$$ Thus every limit pseudo-orbit is limit shadowed and $f$ has the limit shadowing property. We can similarly prove that $f$ has the negative limit shadowing property.
\end{proof}

Now we define the \emph{specification property}. First, we recall what a specification is. A pair $(\tau,P)$ is a \emph{specification} if it consists of a finite collection $\tau=\{I_1,\dots,I_m\}$ of finite intervals $I_i=[a_i,b_i]\subset\Z$ and a map $P:\bigcup_{i=1}^mI_i\rightarrow X$ such that for each $t_1, t_2\in I\in\tau$ we have $$f^{t_2-t_1}(P(t_1))=P(t_2).$$ The specification $(\tau,P)$ is said to be $L$-spaced if $a_{i+1}\geq b_i+L$ for all $i\in\{1,\dots,m\}$. Moreover, it is $\eps$-shadowed by $y\in X$ if $$d(f^n(y),P(n))<\eps \,\,\,\,\,\, \textrm{for every} \,\,\, n\in \bigcup_{i=1}^mI_i.$$
We say that a homeomorphism $f:X\rightarrow X$ has the \textit{specification property} if for every $\eps>0$ there exists $L\in\N$ such that every $L$-spaced specification is $\eps$-shadowed. It is well known (see \cite{DGS}) that diffeomorphisms with the specification property are \emph{topologically mixing}, i.e., for every two open sets $U$ and $V$ there exists $N\in\N$ such that $f^n(U)\cap V\neq\emptyset$ for all $n\geq N$. In particular, they are topologically transitive.

We prove the following.

\begin{lemma}\label{lematopologico}
Every expansive homeomorphism $f:X\rightarrow X$ with the shadowing and specification properties has the two-sided limit shadowing property.
\end{lemma}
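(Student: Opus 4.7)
The plan is to build a two-sided limit shadow by combining one-sided shadows with the specification property, and then use expansivity to upgrade uniform closeness into asymptotic decay. Let $(x_i)_{i \in \Z}$ be a two-sided limit pseudo-orbit. By Lemma \ref{shadexp}, $f$ has both the limit shadowing and negative limit shadowing properties; applying them to the restrictions $(x_i)_{i \geq 0}$ and $(x_i)_{i \leq 0}$ produces points $y^+$ and $y^-$ with $d(f^i(y^+),x_i) \to 0$ as $i \to +\infty$ and $d(f^i(y^-),x_i) \to 0$ as $i \to -\infty$. It therefore suffices to find a single point $z \in W^u(y^-) \cap W^s(y^+)$, because the triangle inequality $d(f^i(z), x_i) \leq d(f^i(z), f^i(y^{\pm})) + d(f^i(y^{\pm}), x_i)$ then yields $d(f^i(z), x_i) \to 0$ as $|i| \to \infty$, which is exactly the two-sided limit shadowing condition.

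To construct such a $z$, I fix $\eps > 0$ strictly smaller than the expansivity constant provided by \cite{M} (so that $W^u_\eps(w) \subset W^u(w)$ and $W^s_\eps(w) \subset W^s(w)$ for every $w$), and let $L = L_\eps$ be the spacing given by the specification property. For each $k \geq L$, I form the specification with two intervals $I_1 = [-k, -L]$ and $I_2 = [L, k]$, declaring $P(i) = f^i(y^-)$ on $I_1$ and $P(i) = f^i(y^+)$ on $I_2$; the compatibility condition in the definition is satisfied because each assignment is a genuine orbit segment, and the spacing between $I_1$ and $I_2$ is exactly $2L \geq L$. The specification property hands us $z_k \in X$ with $d(f^i(z_k), f^i(y^-)) < \eps$ on $I_1$ and $d(f^i(z_k), f^i(y^+)) < \eps$ on $I_2$. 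Extracting a convergent subsequence $z_{k_j} \to z$ in the compact space $X$ and passing pointwise to the limit in $i$, one obtains $d(f^i(z), f^i(y^-)) \leq \eps$ for all $i \leq -L$ and $d(f^i(z), f^i(y^+)) \leq \eps$ for all $i \geq L$.

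The final step uses expansivity to convert these uniform bounds into asymptotic decay. The uniform bound at every $i \leq -L$ says precisely that $f^{-L}(z)$ belongs to $W^u_\eps(f^{-L}(y^-))$, and by the choice of $\eps$ this set is contained in $W^u(f^{-L}(y^-))$; shifting by $f^L$ then yields $z \in W^u(y^-)$. The symmetric argument based on $f^L(z)$ gives $z \in W^s(y^+)$, and combining with the limit shadowing of $y^{\pm}$ finishes the proof. The subtle point that I expect to be the main obstacle is the choice of orbit assignments $P(i) = f^i(y^{\pm})$, naturally aligned with the indices $i$ of the original pseudo-orbit: a naive use of specification produces a shadow that tracks a shifted version $(x_{i-L})$ of the pseudo-orbit, so one has to arrange the assignments so that the inevitable gap of length $L$ is absorbed by the $f$-equivariance of $W^u$ and $W^s$, which is exactly what the shifted-interval choice $[-k,-L] \cup [L,k]$ and the orbit-compatible maps $P$ above accomplish.
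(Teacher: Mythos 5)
Your proof is correct, and in the key gluing step it takes a route that differs from the paper's. Both arguments begin identically: Lemma \ref{shadexp} yields points $y^{-},y^{+}$ (the paper's $p_1,p_2$) limit-shadowing the two tails, and the task reduces to producing a point of $W^u(y^-)\cap W^s(y^+)$, which expansivity then converts from uniform $\eps$-closeness on half-orbits into genuine asymptotic convergence. The paper applies the specification property only once, to the two \emph{singleton} intervals $\{-N\},\{N\}$ with $P(-N)=f^{-N}(p_1)$, $P(N)=f^{N}(p_2)$, obtaining a connecting point $z$; it then concatenates the negative orbit of $p_1$, the middle orbit segment of $z$, and the positive orbit of $p_2$ into a $\delta$-pseudo-orbit and invokes the \emph{shadowing property a second time} to get a single orbit $\eps$-close to both tails. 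You instead apply specification to a family of growing interval pairs $[-k,-L]\cup[L,k]$ with orbit-compatible data $P(i)=f^i(y^{\mp})$, extract a convergent subsequence of the resulting shadowing points by compactness of $X$, and obtain in the limit a single point $z$ with $d(f^i(z),f^i(y^-))\leq\eps$ for all $i\leq -L$ and $d(f^i(z),f^i(y^+))\leq\eps$ for all $i\geq L$; the $f$-equivariance of the (local) unstable and stable sets then places $z$ in $W^u(y^-)\cap W^s(y^+)$. Your version buys two things: the shadowing property enters only through Lemma \ref{shadexp} (so the gluing itself needs only specification, expansivity and compactness), and you avoid checking that the concatenated sequence is a $\delta$-pseudo-orbit at the junction times, a point the paper passes over quickly. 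What the paper's version buys is that it needs just one fixed, two-point specification rather than an infinite family plus a limit-point extraction. Two small cosmetic remarks: your closed bound $\leq\eps$ obtained after passing to the limit should be compared against a constant for which the inclusions $W^{s}_{c}\subset W^{s}$, $W^{u}_{c}\subset W^{u}$ hold with $\eps<c$ --- your phrase ``strictly smaller than the expansivity constant'' is doing exactly this job and is fine, but say explicitly that $\leq\eps<c$ gives membership in $W^{u}_{c}$; and the worry about a ``shifted'' pseudo-orbit in your last sentence is not a real issue in either argument, since the data $P$ is anchored to absolute time from the start.
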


\begin{proof}
Let $\{x_n\}_{n\in\Z}$ be a two-sided limit pseudo-orbit. We proved in Lemma \ref{shadexp} that $f$ has limit shadowing and negative limit shadowing. Thus there exists points $p_1,p_2\in X$ such that $d(f^n(p_1),x_n)\rightarrow0$ when $n\rightarrow-\infty$ and $d(f^n(p_2),x_n)\rightarrow0$ when $n\rightarrow\infty$.

The expansiveness ensures the existence of $\eps>0$ such that $W^s_{\eps}(y)\subset W^s(y)$ and $W^u_{\eps}(y)\subset W^u(y)$ for all $y\in X$. As $f$ has the shadowing property and the specification property we obtain $\delta>0$ and $L\in\N$ such that every $\delta$-pseudo-orbits is $\eps$-shadowed and every $L$-spaced specification is $\delta$-shadowed.

Choose $N\in\N$ such that $2N\geq L$, $d(f^{-n}(p_1),x_{-n})<\delta$ and $d(f^n(p_2),x_n)<\delta$ for all $n\geq N$. Define $I_1=\{-N\}$, $I_2=\{N\}$, $P(-N)=f^{-N}(p_1)$ and $P(N)=f^N(p_2)$. Then $(\{I_1,I_2\},P)$ is a $L$-spaced specification and there exists a point $z\in X$ satisfying $$d(f^{-N}(z),f^{-N}(p_1))=d(f^{-N}(z),P(-N))<\delta$$ and $$d(f^N(z),f^N(p_2))=d(f^N(z),P(N))<\delta.$$ This implies that the sequence $(y_n)_{n\in\Z}$ defined by
$$y_n=f^n(p_1), \,\,\, n\leq-N$$
$$y_n=f^n(z), \,\,\, -N<n<N$$
$$y_n=f^n(p_2), \,\,\, n\geq N$$
is a $\delta$-pseudo orbit. Then there exists a point $y\in X$ such that $d(f^n(y),f^n(p_1))<\eps$ for all $n\leq-N$ and $d(f^n(y),f^n(p_2))<\eps$ for all $n\geq N$. By the choice of $\eps$ we obtain that $d(f^n(y),f^n(p_1))\rightarrow0$ when $n\rightarrow-\infty$ and $d(f^n(y),f^n(p_2))\rightarrow0$ when $n\rightarrow\infty$. By the choice of $p_1$ and $p_2$ we obtain that $d(f^n(y),x_n)\rightarrow0$ when $|n|\rightarrow\infty$.
\end{proof}

It is well known that Anosov diffeomorphisms have the shadowing and expansiveness properties (see \cite{R}). As noted in the introduction it is not known if Anosov diffeomorphisms are transitive and much less if they have the specification property. If we consider a connected and compact manifold $M$, transitive Anosov diffeomorphisms are topologically mixing and thus have the specification property (see \cite{DGS}). So transitive Anosov diffeomorphisms have all the topological properties of the hypothesis of Lemma \ref{lematopologico} and we obtain the following:

\begin{corollary}\label{tsls}
Transitive Anosov diffeomorphisms have the two-sided limit shadowing property.
\end{corollary}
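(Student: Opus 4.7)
The plan is to deduce the corollary directly from Lemma \ref{lematopologico} by verifying that a transitive Anosov diffeomorphism $f$ on a connected closed manifold $M$ satisfies all three hypotheses: expansiveness, the shadowing property, and the specification property. The first two are classical facts about Anosov diffeomorphisms, with the shadowing property proved in \cite{R}; expansiveness follows from the local product structure induced by the hyperbolic splitting $TM=E^s\oplus E^u$, using the standard expansiveness constant given by the size of admissible stable/unstable manifolds.

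The nontrivial hypothesis to verify is the specification property. Here I would proceed in two steps. First, I would argue that a transitive Anosov diffeomorphism on a connected manifold is topologically mixing. This is a well-known consequence of the spectral decomposition applied to the unique basic piece $M$: transitivity gives $\Omega(f)=M$ and a single basic set, and connectedness of $M$ rules out a nontrivial cyclic decomposition into mixing components, forcing $f$ itself to be topologically mixing. Second, I would invoke the classical result (see \cite{DGS}) that a topologically mixing Axiom A basic set has the specification property; applied to the whole manifold, this gives the specification property for $f$.

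Having verified all three hypotheses, Lemma \ref{lematopologico} immediately yields that $f$ has the two-sided limit shadowing property, which is exactly the statement of the corollary.

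The only genuine obstacle is making the topologically-mixing-implies-specification step rigorous; this is a standard fact cited to \cite{DGS} in the excerpt, so in this context it is a citation rather than something to reprove. The rest is a matter of assembling well-known properties of transitive Anosov diffeomorphisms and applying the preceding lemma.
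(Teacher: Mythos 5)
Your proposal is correct and follows essentially the same route as the paper: verify that a transitive Anosov diffeomorphism on a closed connected manifold is expansive, has the shadowing property (citing \cite{R}), and is topologically mixing hence has the specification property (citing \cite{DGS}), then apply Lemma \ref{lematopologico}. The extra detail you give on why transitivity plus connectedness yields mixing is a reasonable elaboration of what the paper cites as standard, not a different argument.
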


Now we obtain some consequences of the two-sided limit shadowing property.

\begin{lemma}\label{limsha}
If $f:X\rightarrow X$ is a homeomorphism with the two-sided limit shadowing property, then $W^s(x)\cap W^u(y)\neq\emptyset$ for all $x,y\in X$.
\end{lemma}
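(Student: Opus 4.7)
The plan is to construct a two-sided limit pseudo-orbit whose positive tail is the forward orbit of $x$ and whose negative tail is the backward orbit of $y$, and then to read off the desired shadow point from the two-sided limit shadowing property.

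More precisely, given $x, y \in X$, I would define the sequence $\{z_i\}_{i\in\Z}$ by
\[
z_i = f^i(y) \text{ for } i < 0, \qquad z_i = f^i(x) \text{ for } i \geq 0.
\]
For every $i \neq -1$, the jump $d(f(z_i), z_{i+1})$ is exactly zero (since the two tails are genuine orbits), while at $i = -1$ the jump equals $d(y, x)$, a single fixed constant. In particular, $d(f(z_i), z_{i+1}) \to 0$ as $|i| \to \infty$ trivially (it is eventually $0$), so $\{z_i\}$ is a two-sided limit pseudo-orbit.

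Applying the two-sided limit shadowing property to $\{z_i\}$ yields a point $z \in X$ with
\[
d(f^i(z), z_i) \to 0 \text{ as } |i| \to \infty.
\]
Unpacking the definition of $z_i$ on the two sides gives $d(f^i(z), f^i(x)) \to 0$ as $i \to +\infty$, hence $z \in W^s(x)$, and $d(f^i(z), f^i(y)) \to 0$ as $i \to -\infty$, hence $z \in W^u(y)$. Thus $z \in W^s(x) \cap W^u(y)$, proving the lemma.

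There is essentially no obstacle here: the statement is a direct unpacking of the definition once one realizes that concatenating two actual orbits produces a two-sided limit pseudo-orbit for free, because the errors $d(f(z_i), z_{i+1})$ vanish identically outside a single index. The only small point to note is that no continuity or smallness hypothesis on the single jump at $i=-1$ is needed, which is exactly why working \emph{without} any restriction on the size of $d(f(x_i), x_{i+1})$ (as emphasized in the introduction) makes this argument immediate.
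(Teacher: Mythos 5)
Your proof is correct and follows essentially the same argument as the paper: both splice the backward orbit of $y$ with the forward orbit of $x$ into a two-sided limit pseudo-orbit (with a single nonzero jump at the splice index, which is irrelevant since no smallness is required) and apply the two-sided limit shadowing property to obtain a point in $W^s(x)\cap W^u(y)$.
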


\begin{proof}
Consider the following sequence:
$$x_n=f^n(y), \,\,\, n\leq0$$
$$x_n=f^n(x), \,\,\, n>0.$$

As $d(f(x_n),x_{n+1})=0$ for $n<0$ and $n>0$, the sequence $(x_n)_{n\in\Z}$ is a two-sided limit pseudo-orbit. The two-sided limit shadowing property assures the existence of a point $z\in X$ such that $d(f^n(z),x_n)\rightarrow0$ when $|n|\rightarrow\infty$, i.e.,
\begin{eqnarray*}
d(f^n(z),f^n(y))\rightarrow 0, \,\,\,\,\,\,\,\,\, n\rightarrow-\infty
\end{eqnarray*}
\begin{eqnarray*}
d(f^n(z),f^n(x))\rightarrow0, \,\,\,\,\,\,\,\,\, n\rightarrow\infty
\end{eqnarray*}
This implies that $z\in W^s(x)\cap W^u(y)$.
\end{proof}

Let $\Lambda$ be a compact invariant set. We say that $\Lambda$ is an \emph{attracting set} of $f$ if there exists a neighborhood $U$ of $\Lambda$ such that $f(\overline{U})\subset U$ and $\Lambda=\bigcap_{n\in\N}f^n(\overline{U})$. We say that $\Lambda$ is a \emph{repelling set} of $f$ if it is an attracting set for $f^{-1}$. As a corollary of Lemma \ref{limsha} we obtain:

\begin{corollary}\label{Morales}
If $f:X\rightarrow X$ is a homeomorphism with the two-sided limit shadowing property, $A$ is an attracting set and $B$ is a repelling set of $f$ then $A=B=X$.
\end{corollary}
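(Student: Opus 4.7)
My plan is to show $A=X$; the symmetric argument applied to $f^{-1}$, under which $B$ becomes an attracting set, then yields $B=X$. I argue by contradiction, assuming $A\neq X$ and letting $U$ be the open neighborhood from the definition of attracting set. Since $A=\bigcap_{n\geq 0}f^n(\overline{U})$, the hypothesis $A\neq X$ forces $\overline{U}\neq X$, so $V:=X\setminus\overline{U}$ is nonempty and open. I would first verify that $V$ is a repelling neighborhood: $f(\overline{U})\subset U$ gives $f^{-1}(X\setminus U)\subset X\setminus\overline{U}=V$, and combined with $\overline{V}\subset X\setminus U$ this yields $f^{-1}(\overline{V})\subset V$. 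Therefore $R:=\bigcap_{n\geq 0}f^{-n}(\overline{V})$ is a nonempty compact repelling set, and $R\subset\overline{V}\subset X\setminus U$ together with $A\subset U$ give $A\cap R=\emptyset$.

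The central step is to prove that $W^u(a)\subset A$ for every $a\in A$. Since $A$ is compact and contained in the open set $U$, the number $\eta:=d(A,X\setminus U)$ is positive. For $z\in W^u(a)$, the full invariance of $A$ keeps $\{f^{-n}(a)\}_{n\geq 0}\subset A$, and $d(f^{-n}(z),f^{-n}(a))\to 0$ then yields $N$ with $f^{-n}(z)\in U$ for all $n\geq N$. Iterating $f(\overline{U})\subset\overline{U}$ on $f^{-N}(z)$ forward shows $f^m(z)\in\overline{U}$ for every $m\geq -N$, and combining with $f^m(z)\in U$ for $m\leq -N$ places the full $\Z$-orbit of $z$ inside $\overline{U}$. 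Hence $z\in f^n(\overline{U})$ for every $n\geq 0$, i.e., $z\in A$. The analogous statement $W^s(r)\subset R$ for $r\in R$ follows by applying the same reasoning to $f^{-1}$.

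Finally, Lemma~\ref{limsha} applied with $x\in R$ and $y\in A$ supplies a point $z\in W^s(x)\cap W^u(y)\subset R\cap A=\emptyset$, the sought contradiction. Thus $A=X$, and $B=X$ follows by symmetry. The main obstacle will be verifying the inclusion $W^u(a)\subset A$: this uses both the compactness of $A$ inside $U$ and the two-sided invariance of $A$, while the rest of the proof is either the dual construction of $R$ or a direct appeal to Lemma~\ref{limsha}.
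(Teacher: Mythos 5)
Your proof is correct and follows essentially the same route as the paper's: assume $\overline{U}\neq X$, build the dual repeller $\bigcap_{n\ge 0}f^{-n}$ of the complement of $U$, connect it to $A$ via Lemma~\ref{limsha}, and contradict the forward invariance coming from $f(\overline{U})\subset U$. Your packaging via the intermediate claims $W^u(a)\subset A$ and $W^s(r)\subset R$ is just a slightly more detailed version of the paper's direct orbit-tracking argument.
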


\begin{proof}
Let $U$ be a neighborhood of $A$ such that $f(\overline{U})\subset U$ and $A=\bigcap_{n\in\N}f^n(\overline{U})$. If $U=X$ then clearly $A=U=X$. Suppose then that $U\varsubsetneq X$. Let $A^{\ast}=\bigcap_{n\in\N}f^{-n}(M\setminus \overline{U})$. Then $A^{\ast}$ is a repelling set for $f$. Let $a\in A$ and $a^{\ast}\in A^{\ast}$. Lemma \ref{limsha} assures the existence of $y\in W^s(a^{\ast})\cap W^u(a)$. Then there exists $N\in\N$ such that $f^{-N}(y)\in U$ and $f^N(y)\in M\setminus \overline{U}$. Thus $f^{-N}(y)$ is a point in $U$ such that its future orbit does not belong to $U$. This contradicts the definition of $U$ and imply that $A=U=X$. We can similarly prove that $B=X$.
\end{proof}

\vspace{+0.4cm}

\hspace{-0.4cm}\textit{Proof of Theorem \ref{depois}}: Let $f\in\mathcal{TLS}$. We will show that the stable manifold of any sink $s$ of $f$ is the whole manifold. This is obviously an absurd because there are no contracting diffeomorphisms on a closed manifold. Suppose, by contradiction, that there exists a point $x\in M$ that do not belong to $W^s(s)$. Lemma \ref{limsha} implies that there exists $y\in W^u(s)\cap W^s(x)$. As $x\notin W^s(s)$ we have $y\neq s$. Then we obtained a point $y\neq s$ that belongs to $W^u(s)$. This is an absurd because the unstable manifold of any sink $s\in M$ is the unitary set $\{s\}$. Thus $W^s(s)=M$. This implies that $f$ do not have any sink. We can similarly prove that $f$ do not have any source.

Now, suppose that $f$ is Axiom A. The Spectral Decomposition Theorem (see \cite{HK} Theorem 18.3.1) says that $\Omega(f)$ is decomposed into a finite number of disjoint, transitive, hyperbolic and isolated sets called \emph{basic sets}. It is known that one of the basic sets must be an attracting set an one must be a repelling set. Corollary \ref{Morales} implies that the whole manifold is a basic set. In particular, $f$ is a transitive Anosov diffeomorphism.

Since the Morse-Smale diffeomorphisms are Axiom A but not Anosov, we obtain that they cannot belong to $\mathcal{TLS}$. \qed

\section{Proof of Theorems \ref{maintheorem2} and \ref{maintheorem1}}

In this section we are interested in obtain hyperbolicity from the two-sided limit-shadowing property. We will do this from both robust and generic viewpoints. We define the stable and unstable sets for a basic set $\Lambda$ as
$$W^s(\Lambda)=\{y\in M; d(f^n(y),\Lambda)\rightarrow0, \,\,\, n\rightarrow\infty\}$$
$$W^u(\Lambda)=\{y\in M; d(f^n(y),\Lambda)\rightarrow0, \,\,\, n\rightarrow-\infty\}.$$
We define a relation on the basic sets as $\Lambda_i>\Lambda_j$ if $$(W^s(\Lambda_i)\setminus\Lambda_i)\cap W^u(\Lambda_j)\neq\emptyset.$$ We say that $f$ satisfies the \textit{no-cycle condition} if $\Lambda_{i_0}>\Lambda_{i_1}>\dots>\Lambda_{i_j}>\Lambda_{i_0}$ is impossible among the basic sets. If $f$ is Axiom A and satisfies the no-cycle condition we say that $f$ is \textit{hyperbolic}.

In \cite{H} S. Hayashi proved a sufficient condition for a diffeomorphism to be hyperbolic. It is a well known condition and is called \emph{star}. We say that $f\in\mundo$ is \textit{star} if there exists a $C^1$ neighborhood $\mathcal{U}$ of $f$ such that every periodic point of every $g\in\mathcal{U}$ is hyperbolic. Actually, it is known that hyperbolicity, $\Omega$-stability and the star condition are equivalent. Our proofs are based on this fact and we will be interested in proving that the two-sided limit shadowing property implies the star condition.

We state a well known lemma in dynamics that bifurcates a non-hyperbolic periodic point into two distinct hyperbolic periodic points with different indices. It is proved in \cite{SSY} Lemma 2.4.

\begin{lemma}\label{pert}
Let $f\in\mundo$ and $p$ be a non-hyperbolic periodic point of $f$. Then for every $C^1$ neighborhood $\mathcal{U}$ of $f$ there are $g\in\mathcal{U}$ and $p_1,p_2\in Per(g)$ such that $ind(p_1)\neq ind(p_2)$.
\end{lemma}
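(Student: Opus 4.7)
The plan is a two-step perturbation, combining Franks' lemma with a local bifurcation of the sort that is standard in $C^1$ generic dynamics. Let $\mathcal{U}$ be an arbitrary $C^1$ neighborhood of $f$ and set $n = \pi(p)$. Since $p$ is non-hyperbolic, $A := Df^n(p)$ has an eigenvalue $\lambda$ with $|\lambda| = 1$.

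First I would apply Franks' lemma inside a sub-neighborhood $\mathcal{U}_1 \subset \mathcal{U}$ to produce $g_0 \in \mathcal{U}_1$ that agrees with $f$ off an arbitrarily small tubular neighborhood of the orbit $\mathcal{O}(p)$, keeps $\mathcal{O}(p)$ periodic of period $n$, and has prescribed derivative $Dg_0^n(p)$. The target normal form is a splitting $T_pM = E^s \oplus E^c \oplus E^u$ with $Dg_0^n(p)|_{E^s}$ uniformly contracting, $Dg_0^n(p)|_{E^u}$ uniformly expanding, and $Dg_0^n(p)|_{E^c}$ one-dimensional with eigenvalue $-1$. A complex eigenvalue of modulus one can first be rotated to $-1$ within the Franks perturbation; if $\lambda = 1$ one instead arranges $Dg_0^n(p)|_{E^c} = \mathrm{Id}$ and carries out a saddle-node perturbation in the next step rather than a period-doubling.

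Next I would linearize $g_0^n$ in a small ball $V$ around $p$ and, using a smooth bump supported in $V$, modify the central coordinate of $g_0^n$ by a perturbation of the form $x_c \mapsto x_c + \varepsilon(-x_c + x_c^3)$. Realized as a $C^1$-small modification of $g_0$ itself supported in a small neighborhood of $\mathcal{O}(p)$, this period-doubling perturbation turns $p$ into a hyperbolic periodic point of the resulting $g \in \mathcal{U}$ and produces a new period-$2n$ orbit $\mathcal{O}(q)$ whose central multiplier lies on the opposite side of the unit circle from that of $p$. Because the hyperbolic splitting $E^s \oplus E^u$ persists under a small perturbation and is shared (by continuity) between $p$ and the nearby orbit $\mathcal{O}(q)$, only the central stabilities differ, and hence $\mathrm{ind}(p)$ and $\mathrm{ind}(q)$ differ by exactly one. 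Setting $p_1 = p$ and $p_2 = q$ gives the conclusion; in the saddle-node alternative one takes as $p_1, p_2$ the two nearby hyperbolic fixed points of $g^n$ produced by the bifurcation.

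The main obstacle is executing the local bifurcation in Step 2 with a genuinely $C^1$-small perturbation. One must balance the bump scale $\eta$ against the bifurcation amplitude $\varepsilon$ (typically $\varepsilon \sim \eta^2$) so that the new zero of the central equation falls inside the support of the bump, while $\|D(g - g_0)\|_{C^0}$ remains small enough to keep $g$ inside $\mathcal{U}$. Pulling the perturbation back from $g_0^n$ to a modification of $g_0$ itself requires a little care: one wants the support near a single point of $\mathcal{O}(p)$, yet must still realize the prescribed change in the $n$-th iterate at $p$ without interacting with the other iterates of the orbit.
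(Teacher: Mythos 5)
First, note that the paper does not prove this lemma at all: it is quoted verbatim from the literature (Lemma 2.4 of \cite{SSY}), so the relevant comparison is with the standard proof given there, which is indeed the Franks-lemma-plus-local-bifurcation scheme you outline. For a real center eigenvalue ($\lambda=\pm1$) your sketch is essentially that argument, and the bookkeeping you flag (balancing the bump scale against the bifurcation amplitude so the perturbation stays $C^1$-small, and spreading the modification of the $n$-th return map over a fundamental domain near one point of the orbit) is routine.

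The genuine gap is your treatment of the complex case. You claim that ``a complex eigenvalue of modulus one can first be rotated to $-1$ within the Franks perturbation,'' but Franks' lemma only allows a $\delta$-small change of each linear map $Df(x_i)$ along the orbit. If $p$ is a fixed point, or has small period, and the center eigenvalues are $e^{\pm i\theta}$ with $\theta$ far from $0$ and $\pi$, no such small perturbation of the return derivative can turn the complex pair into a one-dimensional eigenvalue $-1$: small perturbations keep the pair complex (and, in particular, cannot change the dimension of the center subspace from $2$ to $1$). The standard way around this, and the one used in \cite{SSY}, is different: perturb the angle slightly so that $e^{i\theta}$ becomes a root of unity $e^{2\pi i p/q}$, linearize near the orbit, so that $g^{q\pi(p)}$ is the identity on a small disk of the two-dimensional center plane; this produces nearby periodic points of higher period, and a further local perturbation makes $p$ and one of these auxiliary orbits hyperbolic with the center plane expanded at one and contracted at the other, yielding indices differing by two. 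Your argument should either be restricted to the real case with the complex case handled by this rational-rotation construction, or you must justify how a large rotation of the center pair can be achieved with small perturbations (which is only plausible when the period is large, and even then needs an explicit estimate). As written, the ``rotate to $-1$'' step fails, e.g., for a non-hyperbolic fixed point with center eigenvalues $e^{\pm i\pi/2}$.
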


We say that a diffeomorphism $f$ is \textit{Kupka-Smale} if all periodic points of $f$ are hyperbolic and for all pair of periodic points $p$ and $q$ of $f$ the manifolds $W^s(p)$ and $W^u(q)$ are transversal. We denote by $\mathcal{KS}$ the set of all Kupka-Smale diffeomorphisms. It is well known that $\mathcal{KS}$ is a residual subset of $\mundo$.

We say that two hyperbolic periodic points $p$ and $q$ are \textit{homoclinicaly related} if $W^s(\mathcal{O}(p))\tcap W^u(\mathcal{O}(q))\neq\emptyset$ and $W^u(\mathcal{O}(p))\tcap W^s(\mathcal{O}(q))\neq\emptyset$. It is easy to see that two hyperbolic periodic points homoclinicaly related have the same index. As another corollary of Lemma \ref{limsha} we obtain:

\begin{corollary}\label{genlimsha}
If $f\in\mathcal{TLS}\cap\mathcal{KS}$ then all periodic points have the same index.
\end{corollary}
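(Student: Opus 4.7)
The plan is to apply Lemma \ref{limsha} twice and then cash in the Kupka-Smale transversality. Let $p,q$ be arbitrary periodic points of $f$; both are hyperbolic since $f\in\mathcal{KS}$. I would first invoke Lemma \ref{limsha} with $(x,y)=(p,q)$ and then with $(x,y)=(q,p)$, producing points
$$z\in W^s(p)\cap W^u(q)\qquad\text{and}\qquad w\in W^s(q)\cap W^u(p),$$
so both intersections are nonempty.

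Since $p$ and $q$ are hyperbolic periodic, the topological stable/unstable sets defined in the paper coincide with the immersed invariant manifolds produced by the stable manifold theorem, of dimensions $ind(p)$ and $\dim M-ind(p)$ (and analogously for $q$). The Kupka-Smale hypothesis then forces each of the two intersections above to be transversal in $M$.

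A nonempty transverse intersection of two immersed submanifolds of an $n$-manifold requires the sum of their dimensions to be at least $n$. Applied to each of the two intersections above this yields
$$ind(p)+(\dim M-ind(q))\geq\dim M \qquad\text{and}\qquad ind(q)+(\dim M-ind(p))\geq\dim M,$$
which combine to give $ind(p)=ind(q)$. As $p,q$ were arbitrary, all periodic points share the same index.

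There is no real obstacle in this argument: Lemma \ref{limsha} supplies all the dynamical content, and the rest is a straightforward dimension count. The only point requiring a moment of care is the routine identification of the topological stable and unstable sets of a hyperbolic periodic orbit with its smooth invariant manifolds, which is what licenses the invocation of Kupka-Smale transversality.
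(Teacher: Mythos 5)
Your proof is correct and follows essentially the same route as the paper: apply Lemma \ref{limsha} to get both intersections $W^s(p)\cap W^u(q)$ and $W^u(p)\cap W^s(q)$ nonempty, then use the Kupka--Smale transversality. The only difference is cosmetic: the paper concludes by noting that $p$ and $q$ are homoclinically related and citing the fact that homoclinically related hyperbolic periodic points have the same index, whereas you carry out the underlying dimension count explicitly.
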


\begin{proof}
Let $p,q$ be two distinct periodic points of $f$. Lemma \ref{limsha} assures that the sets $W^s(p)\cap W^u(q)$ and $W^u(p)\cap W^s(q)$ are non-empty. As $f\in\mathcal{KS}$, these intersections are transversal. Thus $p$ and $q$ are homoclinicaly related and have the same index.
\end{proof}

\vspace{+0.4cm}

\hspace{-0.4cm}\textit{Proof of Theorem \ref{maintheorem2}}:
Suppose that $f\in int(\mathcal{TLS})$. Let $\mathcal{U}$ be a $C^1$ neighborhood of $f$ contained in $\mathcal{TLS}$. We claim that $f$ is a star diffeomorphism. Suppose by contradiction that $f$ is not star. Then there exist $g\in\mathcal{U}$ and $p$ a non-hyperbolic periodic point of $g$. We use Lemma \ref{pert} and obtain $h\in\mathcal{U}$ and two distinct hyperbolic periodic points $p,q$ of $h$ with different indices. As these points are hyperbolic and $\mathcal{KS}$ is dense in $\mundo$ we can perturb $h$ to $\tilde{h}\in\mathcal{U}\cap\mathcal{KS}$ that has two distinct hyperbolic periodic points $p_{\tilde{h}},q_{\tilde{h}}$ with different indices. This contradicts Corollary \ref{genlimsha} and proves the claim. Hayashi's Theorem \cite{H} implies that $f$ is hyperbolic and Theorem \ref{depois} finishes the proof.
\qed

\vspace{+0.3cm}

To prove Theorem \ref{maintheorem1} we first need to introduce some generic machinery. Inspired in the work of S. Gan and D. Yang (\cite{GY} Lemma 2.1) on $C^1$ generic expansive homoclinic classes we prove the following:

\begin{lemma}\label{difind}
There exists an open and dense set $\mathcal{P}$ of $\mundo$ such that all $f\in\mathcal{P}$ satisfies: if for any $C^1$ neighborhood $\mathcal{U}$ of $f$ some $g\in\mathcal{U}$ has two distinct hyperbolic periodic points with different indices then $f$ has two distinct hyperbolic periodic points with different indices.
\end{lemma}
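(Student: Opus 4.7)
The idea is a general-topology trick. Set
$$\mathcal{A}=\{f\in\mundo : f \text{ has two distinct hyperbolic periodic points with different indices}\}$$
and define
$$\mathcal{P}=\mathcal{A}\cup \operatorname{int}(\mundo\setminus\mathcal{A}),$$
where the interior is taken with respect to the $C^1$-topology. The plan is to show $\mathcal{P}$ is open and dense, and then to verify it has the desired property by a short logical argument.

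First I will check that $\mathcal{A}$ is $C^1$-open. If $p,q$ are hyperbolic periodic points of $f$ with $\indi(p)\neq\indi(q)$, then the Implicit Function Theorem (applied to the appropriate iterates) produces continuations $p_g,q_g$ for every $g$ in a $C^1$-neighborhood $\mathcal{V}$ of $f$; these are again hyperbolic with the same indices as $p$ and $q$. Hence $\mathcal{V}\subset\mathcal{A}$, so $\mathcal{A}$ is open, and consequently $\mathcal{P}$ is open as a union of two open sets.

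Next I will check density. The complement
$$\mundo\setminus\mathcal{P}=(\mundo\setminus\mathcal{A})\setminus\operatorname{int}(\mundo\setminus\mathcal{A})$$
is the topological boundary of the closed set $\mundo\setminus\mathcal{A}$. The boundary of any closed set has empty interior, since any open subset of a closed set $C$ is already contained in $\operatorname{int}(C)$ and therefore cannot meet $C\setminus\operatorname{int}(C)$. Thus $\mathcal{P}$ is dense in $\mundo$.

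Finally I verify the defining property. Let $f\in\mathcal{P}$ and suppose that every $C^1$-neighborhood of $f$ contains some $g\in\mathcal{A}$. Then $f$ lies in the $C^1$-closure of $\mathcal{A}$, which precludes $f\in\operatorname{int}(\mundo\setminus\mathcal{A})$. Since $f\in\mathcal{P}$, we must have $f\in\mathcal{A}$, i.e., $f$ itself has two distinct hyperbolic periodic points with different indices, as required. There is really no dynamical obstacle: the only input is the persistence of hyperbolic periodic points under $C^1$-perturbations, and the construction $\mathcal{P}=\mathcal{A}\cup\operatorname{int}(\mundo\setminus\mathcal{A})$ is designed precisely to make the implication ``some nearby diffeomorphism has the feature $\Rightarrow$ $f$ has the feature'' automatic on an open dense set.
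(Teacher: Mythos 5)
Your proof is correct and is essentially the paper's own argument: the paper takes the same open set $\mathcal{A}$ and sets $\mathcal{P}=\mathcal{A}\cup(\mundo\setminus\overline{\mathcal{A}})$, which coincides with your $\mathcal{A}\cup\operatorname{int}(\mundo\setminus\mathcal{A})$, and verifies the implication in the same way. You merely spell out the density and the persistence of hyperbolic periodic points a bit more explicitly than the paper does.
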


\begin{proof}
Let $\mathcal{A}$ be the set of $C^1$ diffeomorphisms that has two distinct hyperbolic periodic points with different indices. The hyperbolicity of these periodic points imply that $\mathcal{A}$ is open in $\mundo$. Let $\mathcal{B}=\mundo\setminus\overline{\mathcal{A}}$. Note that $\mathcal{B}$ is also open in $\mundo$. Thus $\mathcal{P}=\mathcal{A}\cup\mathcal{B}$ is open and dense in $\mundo$.

Let $f\in\mathcal{P}$ and suppose that there is a sequence $(f_n)_{n\in\N}$ of diffeomorphisms converging to $f$ in the $C^1$ topology such that each $f_n$ has two distinct hyperbolic periodic points with different indices. Then $f_n\in\mathcal{A}$ for all $n\in\N$ implies that $f$ cannot belong to $\mathcal{B}$ and must belong to $\mathcal{A}$, i.e., $f$ has two distinct hyperbolic periodic points with different indices.
\end{proof}

\hspace{-0.4cm}\textit{Proof of Theorem \ref{maintheorem1}}:
Let $f\in\mathcal{TLS}\cap\mathcal{KS}\cap\mathcal{P}$. Suppose by contradiction that $f$ is not a star diffeomorphism, that is, for every neighborhood $\mathcal{U}$ of $f$ there exists some $g\in\mathcal{U}$ that has a non-hyperbolic periodic point. We perturb $g$ using Lemma \ref{pert} and obtain $h\in\mathcal{U}$ such that $h$ has two distinct hyperbolic periodic points with different indices. Since $f\in\mathcal{P}$ it has two distinct hyperbolic periodic points with different indices. But this contradicts Corollary \ref{genlimsha} and proves that $f$ is a star diffeomorphism. As above Theorem \ref{depois} finishes the proof. \qed

\section*{Acknowledgements}

The author would like to thank Alexander Arbieto, Bruno Santiago and Welington Cordeiro for the fruitful discussions during the preparation of the paper. Work partially supported by CAPES from Brazil.

\end{document}